\definecolor{vert}{rgb}{0,0.6,0}
\definecolor{vert}{rgb}{0,0.6,0}
\numberwithin{figure}{section}
\theoremstyle{plain}
\newtheorem{thm}{Theorem}[section]
\newtheorem{defn}{Definition}
\newtheorem{lem}[thm]{Lemma}
\theoremstyle{remark}
\newtheorem{rem}{\bf{Remark}}
\numberwithin{equation}{section}
\newcommand{\N}{\mathbb{N}}
\newcommand{\R}{\mathbb{R}}
\newcommand{\T}{\mathbb{T}}
\newcommand{\Z}{\mathbb{Z}}
\newcommand{\cA}{\mathcal{A}}
\newcommand{\cE}{\mathcal{E}}
\newcommand{\cR}{\mathcal{R}}
\newcommand{\sC}{\mathscr{C}}
\newcommand{\BUC}{{\rm BUC\,}}
\newcommand{\Lip}{{\rm Lip\,}}
\newcommand{\al}{\alpha}
\newcommand{\gam}{\gamma}
\newcommand{\del}{\delta}
\newcommand{\ep}{\varepsilon}
\newcommand{\lam}{\lambda}
\newcommand{\ol}{\overline}
\newcommand{\co}{{\rm co}\,}
\begin{document}

\title[Effective fronts of polytope shapes]
{Effective fronts of polytope shapes}

\author[W. JING, H. V. TRAN, Y. YU]
{Wenjia Jing, Hung V. Tran, Yifeng Yu}

\thanks{
The work of WJ is partially supported by the NSFC Grants No.\,11701314 and No.\,11871300.
The work of HT is partially supported by NSF grant DMS-1664424 and NSF CAREER grant DMS-1843320.
}

\address[W. Jing]
{
Yau Mathematical Sciences Center,
Tsinghua University, No.1 Tsinghua Yuan,
Beijing 100084, China }
\email{wjjing@tsinghua.edu.cn}

\address[H. V. Tran]
{
Department of Mathematics, 
University of Wisconsin Madison, 480 Lincoln  Drive, Madison, WI 53706, USA}
\email{hung@math.wisc.edu}

\address[Y. Yu]
{
Department of Mathematics, 
University of California, Irvine,
410G Rowland Hall, Irvine, CA 92697, USA}
\email{yyu1@math.uci.edu}

\date{\today}
\keywords{Homogenization; Front propagation; effective Hamiltonian; effective fronts; centrally symmetric polytopes; optimal rate of convergence}
\subjclass[2010]{
35B40, 
37J50, 
49L25 
}

\maketitle

\begin{abstract}
We study the periodic homogenization of first order front propagations.
Based on PDE methods, we  provide a simple proof that  for $n \geq 3$, the class of centrally symmetric polytopes with rational coordinates and nonempty interior is admissible as effective fronts, which was also established in  \cite{BB2006, J2009} in the form of stable norms as an extension of Hedlund's classical result \cite{Hed}.  
Besides, we  obtain the optimal convergence rate of the homogenization problem for this class. 
\end{abstract}


\section{Introduction}

In this paper, we study a couple of fine questions related to the homogenization of the following Hamilton-Jacobi equation:
\begin{equation}\label{HJ-ep}
\begin{cases}
u^\ep_t + a\left(\frac{x}{\ep}\right) |Du^\ep| = 0 \quad &\text{ in } \R^n \times (0,\infty),\\
u^\ep(x,0) = g(x) \quad &\text{ on } \R^n.
\end{cases}
\end{equation}
Here, $g\in \BUC(\R^n) \cap \Lip(\R^n)$ is the initial data, where $\BUC(\R^n)$ is the space of bounded, uniformly continuous functions on $\R^n$. The coefficient $a: \R^n \to \R$ determines the normal velocity in the underlying front propagation model; the small number $\ep \in (0,1)$ is the spatial scale of variations in $a$. This problem hence models front propagations in oscillatory environment. We assume throughout the paper that $a$ is a continuous,  $\Z^n$-periodic and non-constant positive function.

The qualitative homogenization of \eqref{HJ-ep} fits in the classical and standard framework (see \cite{LPV, Ev1} for example). 
Here is a brief review of the result. As $\ep \to 0$, the solution $u^\ep$ of the above problem converges, locally uniformly in $\R^n \times [0, \infty)$, to the solution of the effective or homogenized problem:
\begin{equation}\label{HJ}
\begin{cases}
u_t + \ol{H}(Du) = 0 \quad &\text{ in } \R^n \times (0,\infty),\\
u(x,0) = g(x) \quad &\text{ on } \R^n.
\end{cases}
\end{equation}
Here, $\ol{H}$ is the so-called effective Hamiltonian. For each $p\in \R^n$, $\ol{H}(p)$ is the unique real  number such that the following equation admits a continuous viscosity solution
\[
 {\rm (E)}_p \qquad a(y) |p +Dv_p(y)| = \ol{H}(p) \quad \text{ in } \T^n.
\] 
This is the well-known cell problem. 

An important feature of problem \eqref{HJ-ep}, due to its special structure, is that the effective Hamiltonian $\ol{H}$ is also intrinsically determined by a shape theorem which says the large time average of the reachable set  from the origin, determined by the environment function $a$, converges to a centrosymmetric convex set. It turns out that $\ol{H}$ is the support function of this convex set; see \eqref{eq:HD} below.

In this paper, we aim to study an inverse shape theorem: what class of convex sets are admissible as the limiting shapes of the averaged reachable sets for some environment function $a$?  We show that for $n \geq 3$, all centrally symmetric polytopes with rational coordinates and nonempty interior are admissible.
The second objective of this paper is to obtain optimal convergence rate of the homogenization problem, and we show this is possible in general when $\ol{H}$ is determined by centrally symmetric polytopes type limiting shapes.

\smallskip

Let us now present the reachable set framework and our main results.

\subsection{Admissible paths and reachable sets}

For $t>0$, let $\cA_{0,t}$ be the set of admissible paths defined as follows
\begin{equation*}
\cA_{0,t}=\left\{\gam \in W^{1,\infty}([0,t],\R^n) \,:\, |\dot \gam(s)| \leq a(\gam(s)) \quad \text{ for a.e. } s \in [0,t] \right\}.
\end{equation*}
The reachable set $\cR_t(x)$ at time $t>0$ emanating from $x\in \R^n$ at time $0$ is defined as following
\[
\cR_t(x)=\left\{ y\in \R^n\,:\, \text{ there exists $\gam \in \cA_{0,t}$ such that } \gam(0)=x, \gam(t)=y \right\}.
\]
Note that since $a$ is periodic and positive, there exist a lower bound $\alpha >0$ and an upper bound $\beta > \alpha$ such that, for all $x\in \R^n$, 
\begin{equation*}
0< \alpha \le a(x) \le \beta.
\end{equation*}
It is then easy to verify that, for any fixed $x \in \R^n$, the set $\cR_t(x)$ is increasing with respect to $t$. We are interested in the large time average of $\cR_t(x)$, that is, the behavior of
\[
\frac{\cR_t(x)}{t} \qquad \text{ as } t \to \infty.
\]
We also denote by $\cR_t(Y)$ the union of $\cR_t(x)$, $x\in Y$. To describe this behavior, we need the following notions.
Let $\sC$ denote the set of non-empty compact subsets of $\R^n$. 
The Hausdorff distance between $E$ and $F$ in $\sC$ is defined by
\begin{align*}
\rho(E,F) &= \max \left \{ \sup_{x\in F} \inf_{y \in E} |x-y|, \ \sup_{y \in E} \inf_{x \in F} |x-y|\right\}\\
&=\inf \left\{ s>0 \,:\, F \subset E + \ol{B}_s,\, E \subset F + \ol{B}_s \right\}.
\end{align*}
It is well known that $(\sC,\rho)$ is a complete metric space. The existence of the limiting shape below is well known to experts. 
\begin{lem}[A Shape Theorem]\label{lem:R-t-conv}
There exists  a compact and convex set $D \subset \R^n$ such that
\begin{equation}\label{R-t-conv}
\lim_{t \to \infty} \frac{\cR_t(x)}{t} = D \quad \text{ in } \quad (\sC,\rho).
\end{equation}
\end{lem}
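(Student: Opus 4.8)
The plan is to realize $D$ as the unit ball of a norm built from minimal travel times. For $x,y\in\R^n$ set
\[
m(x,y)=\inf\{t>0:\ y\in\cR_t(x)\},
\]
so that, by monotonicity of $t\mapsto\cR_t(x)$ and closedness of reachable sets (via Arzel\`a--Ascoli on the uniformly Lipschitz admissible paths), $\cR_t(x)=\{y:\ m(x,y)\le t\}$. First I would record the elementary properties of $m$. Since every admissible path has speed at most $\beta$, one gets $m(x,y)\ge|x-y|/\beta$, while the straight segment traversed at constant speed $\alpha\le a$ is admissible, so $m(x,y)\le|x-y|/\alpha$; in particular $m$ is finite. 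Concatenation of admissible paths (equivalently $\cR_{s+t}(z)=\cR_s(\cR_t(z))$) gives the triangle inequality $m(x,z)\le m(x,y)+m(y,z)$; reversing an admissible path, again admissible because the constraint $|\dot\gam|\le a(\gam)$ is time-symmetric, gives $m(x,y)=m(y,x)$; and $\Z^n$-periodicity of $a$ gives $m(x+k,y+k)=m(x,y)$ for $k\in\Z^n$.

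Next I would construct the limiting norm. On the lattice, $f(p):=m(0,p)$ is subadditive, since $f(p+q)\le m(0,p)+m(p,p+q)=f(p)+f(q)$, the last equality by periodicity. Fekete's lemma applied to $k\mapsto f(kp)$ yields, for each $p\in\Z^n$, that $\ol m(p):=\lim_{k\to\infty}f(kp)/k=\inf_k f(kp)/k$ exists, and passing to the limit shows $\ol m$ is subadditive and positively homogeneous on $\Z^n$. Extending by homogeneity to $\Q^n$ and noting that the bounds $|p|/\beta\le\ol m(p)\le|p|/\alpha$ persist, $\ol m$ is $(1/\alpha)$-Lipschitz on $\Q^n$, hence extends continuously to $\R^n$; subadditivity, the symmetry $\ol m(-y)=\ol m(y)$ inherited from $m$, and $\ol m(y)\ge|y|/\beta>0$ for $y\ne0$ then make $\ol m$ a norm on $\R^n$. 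Set $D:=\{y:\ \ol m(y)\le1\}$, which is a centrally symmetric compact convex set with $0$ in its interior.

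The key analytic step is that $\tfrac1t m(0,ty)\to\ol m(y)$ as $t\to\infty$, locally uniformly in $y$. For $y=p/N\in\Q^n$ this follows from $m(0,kNy)=m(0,kp)$ together with the linear bound, which controls the discrepancy between $t$ and the nearest multiple of $N$; and since $y\mapsto\tfrac1t m(0,ty)$ is $(1/\alpha)$-Lipschitz uniformly in $t$, pointwise convergence on the dense set $\Q^n$ upgrades to uniform convergence on compact sets, the limit necessarily being $\ol m$. To conclude, note $\cR_t(0)\subset\ol B_{\beta t}$, so $\cR_t(0)/t\subset\ol B_\beta$, a fixed compact set; combining the uniform convergence on $\ol B_\beta$ with $\cR_t(0)/t=\{y:\ \tfrac1t m(0,ty)\le1\}$ and the scaling relation $\{\ol m\le1+\del\}\subset D+\ol B_{C\del}$ gives both $\cR_t(0)/t\subset D+\ol B_\del$ and $D\subset\cR_t(0)/t+\ol B_\del$ for $t$ large, i.e.\ $\rho(\cR_t(0)/t,D)\to0$. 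For an arbitrary starting point $x$, concatenation yields $\cR_{t-m(0,x)}(0)\subset\cR_t(x)\subset\cR_{t+m(0,x)}(0)$, and dividing by $t$ and using boundedness of $D$ shows $\cR_t(x)/t\to D$ as well.

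I expect the main obstacle to be the construction of $\ol m$: one must use periodicity of $a$, rather than a (false) translation invariance of $m$, to get lattice subadditivity, and then carry out the passage from pointwise to locally uniform convergence of $\tfrac1t m(0,t\,\cdot\,)$; once these are settled the Hausdorff convergence is a soft squeezing argument. Alternatively, one could short-circuit part of this by identifying $\ol m$ with the stable norm and $\ol H$ with its support function, but the self-contained route above is more transparent.
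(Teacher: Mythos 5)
Your proof is correct, but it takes a genuinely different route from the paper's. The paper works directly with the sets: it establishes the set-subadditivity $\cR_{t+s}(Y)\subset\cR_s(Y)+\cR_t(Y)+\tilde Y$, passes to convex hulls to get convergence of $\co\ol\cR_t(Y)/t$ by the subadditive lemma for compact convex sets, and then spends most of its effort ``deconvexifying'': exposed points of $D$ are shown to be approximated by points of the actual reachable set (a linear functional attains its maximum over $\co\ol\cR_t(Y)$ at exposed points, which lie in $\ol\cR_t(Y)$), Straszewicz's theorem upgrades this to extreme points, and Carath\'eodory plus an explicit concatenation of near-optimal paths with connectors handles general points of $D$. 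You instead build the travel-time function $m(x,y)$, use $\Z^n$-periodicity (correctly, in place of a false translation invariance) to get lattice subadditivity, apply Fekete to obtain the stable norm $\ol m$, and realize $D$ as its unit ball, with Hausdorff convergence following from locally uniform convergence of $\tfrac1t m(0,t\,\cdot)$ to $\ol m$. Your approach buys convexity and central symmetry of $D$ for free from subadditivity and symmetry of $\ol m$, and the deconvexification problem never arises; it also handles arbitrary starting points $x$ cleanly via the triangle inequality. The paper's approach keeps the objects $\cR_t(Y)$ and their convex hulls in the foreground, which is what the later quantitative arguments (Lemmas \ref{lem:rate-connection}--\ref{lem:P-rate2}) are phrased in terms of, and its path-concatenation construction \eqref{eq:connector} is reused there. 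The only places where your sketch leans on standard but unstated facts are the closedness of $\cR_t(x)$ and the identification $\cR_t(x)=\{y: m(x,y)\le t\}$ (the Arzel\`a--Ascoli compactness argument you invoke does settle both, since the constraint $|\dot\gam|\le a(\gam)$ is stable under uniform limits of equi-Lipschitz paths); these are routine and do not affect correctness.
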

If needed, we write $D_a$ instead of $D$ to demonstrate the clear dependence of the limit \eqref{R-t-conv} in $a$. An important property of \eqref{HJ-ep} is, as mentioned before, the effective Hamiltonian $\ol{H}$ of the homogenized problem, which is usually determined by ${\rm (E)}_p$, is the support function of the above limit shape $D$. This means: for $p \in \R^n$,
\begin{equation}\label{eq:HD}
\ol{H}(p) = \sup_{q \in D} \, p\cdot q.
\end{equation}
We then have the following result.

\begin{thm}[Qualitative homogenization result for \eqref{HJ-ep}]\label{thm:hom}
For each $\ep>0$, let $u^\ep$ be the unique viscosity solution to \eqref{HJ-ep}.
Let $u$ be the solution of \eqref{HJ}, with $\ol{H}$ defined by \eqref{eq:HD}. Then, as $\ep \to 0$, $u^\ep$ converges locally uniformly on $\R^n \times [0,\infty)$ to the solution $u$ of \eqref{HJ}.\end{thm}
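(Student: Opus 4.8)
The plan is to combine the optimal control representation of $u^\ep$ with the shape theorem (Lemma~\ref{lem:R-t-conv}) and then to recognize the resulting limit as the Hopf--Lax solution of \eqref{HJ}. As preliminaries, note that since $a$ is continuous, $\Z^n$-periodic with $0<\al\le a\le\beta$, and $g\in\BUC(\R^n)\cap\Lip(\R^n)$, both Cauchy problems \eqref{HJ-ep} and \eqref{HJ} are well posed in the viscosity sense and enjoy comparison: the Hamiltonians are Lipschitz in $p$, their $x$-dependence is uniformly continuous, and $\ol H$ in \eqref{eq:HD}, being the support function of the compact convex set $D$, is convex, positively $1$-homogeneous, hence Lipschitz. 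Standard a priori estimates --- or the representation formula below --- also give that $\{u^\ep\}$ is equi-bounded and equi-Lipschitz on $\R^n\times[0,\infty)$ with constants depending only on $\al,\beta$ and $\Lip(g)$.

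The key point is the control representation. Since the Legendre transform of $p\mapsto a(y)|p|$ is the indicator function of the ball $\{q:|q|\le a(y)\}$, and since the constraint $|\dot\gam(s)|\le a(\gam(s))$ is invariant under time reversal, classical optimal control theory (dynamic programming) gives
\[
u^\ep(x,t)=\min_{y\in\,\ep\,\cR_{t/\ep}(x/\ep)}g(y),
\]
where $\ep\,\cR_{t/\ep}(x/\ep)=\{\ep z:z\in\cR_{t/\ep}(x/\ep)\}$ is exactly the set of points reachable from $x$ in time $t$ under the rescaled constraint $|\dot\gam(s)|\le a(\gam(s)/\ep)$. In the same way, since $\ol H$ is the support function of $D$, its Legendre transform is the indicator of $D$, and the Hopf--Lax formula identifies the unique viscosity solution of \eqref{HJ} as $u(x,t)=\min_{y\in x-tD}g(y)$; as $D$ is centrally symmetric (recalled in the introduction, and equivalent to $\ol H(-p)=\ol H(p)$, which follows upon replacing $v_p$ by $-v_p$ in ${\rm (E)}_p$) we may rewrite this as $u(x,t)=\min_{y\in x+tD}g(y)$.

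Next I would upgrade Lemma~\ref{lem:R-t-conv} to a version uniform in the base point. Because $a$ is $\Z^n$-periodic one has $\cR_s(z+k)=k+\cR_s(z)$ for every $k\in\Z^n$, so the translated reachable set $\cR_s(z)-z$ depends on $z$ only through its class in $\R^n/\Z^n$; together with Lemma~\ref{lem:R-t-conv}, the compactness of $[0,1]^n$, and the continuity (uniform in $s$) of $z\mapsto\cR_s(z)$ in $(\sC,\rho)$, this yields
\[
\sup_{z\in\R^n}\rho\!\left(\frac{\cR_s(z)-z}{s},\,D\right)\longrightarrow0\qquad\text{as }s\to\infty.
\]
Putting $s=t/\ep$ and $z=x/\ep$, using $z/s=x/t$ and the scaling $\rho(\lam A,\lam B)=\lam\,\rho(A,B)$ together with translation invariance of $\rho$, we obtain $\rho\big(\ep\,\cR_{t/\ep}(x/\ep),\,x+tD\big)\to0$ as $\ep\to0$, uniformly for $(x,t)$ in compact subsets of $\R^n\times[0,\infty)$; the degenerate regime of small $t$ is handled separately via the crude inclusions $\ep\,\cR_{t/\ep}(x/\ep)\subset\ol B_{\beta t}(x)$ and $x+tD\subset\ol B_{\beta t}(x)$ (recall $D\subset\ol B_\beta$).

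Finally, since Hausdorff-close compact sets have close minima of a fixed uniformly continuous function --- if $\rho(A,B)\le\del$ then $|\min_Ag-\min_Bg|\le\omega_g(\del)$, with $\omega_g$ a modulus of continuity of $g$ --- the two representation formulas together with the previous step give
\[
u^\ep(x,t)\longrightarrow\min_{y\in x+tD}g(y)=u(x,t)\qquad\text{as }\ep\to0,
\]
locally uniformly on $\R^n\times[0,\infty)$, which is the assertion. I expect the uniform-in-base-point shape theorem to be the main obstacle; once the periodicity reduction to the torus is made it follows from Lemma~\ref{lem:R-t-conv} by a compactness/equicontinuity argument, the relevant equicontinuity being that, uniformly in $s$, $\rho(\cR_s(z),\cR_s(z'))\le C(\al,\beta)\,|z-z'|$ for $|z-z'|$ small (travel from $z'$ to $z$ in time at most $|z-z'|/\al$, then follow a truncation of the path realizing the target, losing at most $(\beta/\al)|z-z'|$ in distance). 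An alternative route bypassing the representation formula is to run the Lions--Papanicolaou--Varadhan perturbed test function method \cite{LPV, Ev1} --- using the equi-Lipschitz bounds, Arzel\`a--Ascoli, and correctors solving ${\rm (E)}_p$ --- to get $u^\ep\to\bar u$ solving $\bar u_t+\ol H_{\mathrm{cell}}(D\bar u)=0$ with $\ol H_{\mathrm{cell}}$ the ergodic constant of ${\rm (E)}_p$, and then to identify $\ol H_{\mathrm{cell}}$ with $\ol H$ in \eqref{eq:HD} by applying the control representation to the linear initial datum $g(x)=p\cdot x$ and invoking Lemma~\ref{lem:R-t-conv} once more.
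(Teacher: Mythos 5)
Your proposal is correct and follows essentially the same route as the paper's proof in the Appendix: the control representation of $u^\ep$ via reachable sets, the Hopf--Lax formula for $u$ with $\ol H$ the support function of $D$, and the shape theorem to pass to the limit. The only cosmetic difference is that you handle uniformity in the base point by an explicit periodicity/equicontinuity reduction to the torus, whereas the paper absorbs this by working with $\cR_t(Y)$, the reachable set from the whole unit cell; both devices are equivalent up to an $O(1)$ connector path.
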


The qualitative homogenization result for \eqref{HJ-ep} with $\ol{H}$ determined by the cell problem ${\rm(E)}_p$, even in a more general framework, is quite well-known in the literature (see \cite{LPV, Ev1} for example). The result above shows that in the specific case of \eqref{HJ-ep}, $\ol{H}$ can also be characterized by \eqref{eq:HD}. We used both forms of $\ol{H}$ in our analysis.

\subsection{Main results}
We proceed to study deeper properties of $\ol{H}$ and the rate of convergence of $u^\ep$ to $u$. In the sequel, the usual $n$-dimensional flat torus is denoted by $\T^n= \R^n / \Z^n$, and all polytopes are assumed to be centered at the origin. We say a non-zero vector $p \in \R^n$ is rational if  $sp$ has integer coordinates for some $s>0$.

\begin{thm}\label{thm:polytope}
Assume that $n \geq 3$.
Let $P \subset \R^n$ be a centrally symmetric polytope with rational vertices and nonempty interior.
Then, there exists an environment function $a \in C^\infty(\T^n, (0,\infty))$ such that $D_a=P$.
\end{thm}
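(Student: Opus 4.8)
The goal is to produce $a$ whose effective Hamiltonian $\ol H$ coincides with the support function $h_P(p)=\max_i|p\cdot v_i|$ of $P$; since by Lemma~\ref{lem:R-t-conv} and \eqref{eq:HD} the set $D_a$ is compact, convex, and has support function $\ol H$, this forces $D_a=P$. Write $P=\co\{\pm v_1,\dots,\pm v_m\}$. As $P$ has nonempty interior the $v_i$ span $\R^n$, so $h_P(p)\ge c_P|p|$ for some $c_P>0$; writing $v_i=\lam_i w_i$ with $\lam_i>0$ and $w_i\in\Z^n$ primitive, we may assume the $w_i$ are pairwise non-parallel. I will establish the two inclusions $D_a\subseteq P$ (i.e.\ $\ol H\le h_P$) and $P\subseteq D_a$ separately.

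\emph{Construction of $a$.} The heuristic is a PDE rendering of Hedlund-type examples: make transport very fast along one closed geodesic in each extreme direction $w_i$, and very slow everywhere else. For generic shifts $b_i$ the images $\ell_i\subset\T^n$ of the affine lines $\R w_i+b_i$ are pairwise disjoint embedded circles of Euclidean length $|w_i|$: two lines in $\R^n$ with $n\ge3$ meet, even after a lattice translation, only when $b_i-b_j$ lies in the Lebesgue-null set $(\R w_i+\R w_j)+\Z^n$, and avoiding finitely many such sets is a generic condition. This is the only place the hypothesis $n\ge3$ enters, and it is essential. Fix $\del>0$ so small that the closed $\del$-tubes $U_i$ about $\ell_i$, together with slightly larger concentric tubes $U_i^+\supset U_i$, are all embedded and pairwise disjoint, and let $U_i^-$ be the concentric $(\del/2)$-tube inside $U_i$. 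Pick $a\in C^\infty(\T^n,(0,\infty))$ with $a\equiv|v_i|$ on $U_i^-$, with $a\equiv\ep_0$ on $\T^n\setminus\bigcup_iU_i$, and with $\ep_0\le a\le|v_i|$ on $U_i$, where $\ep_0\in(0,\min_i|v_i|)$ is a small constant to be fixed in the next step; then $\ep_0\le a\le\max_i|v_i|$ and $a$ is already smooth, so no separate mollification is needed.

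\emph{Proof that $\ol H\le h_P$.} Fix $p\in\R^n$ and split $p=p_i^{\parallel}+p_i^{\perp}$ with $p_i^{\parallel}=\tfrac{p\cdot w_i}{|w_i|^2}w_i$. The key observation is that, because $p_i^{\perp}$ is orthogonal to $w_i$, the affine function $g_i(x):=-p_i^{\perp}\cdot(x-b_i)$ is invariant under $x\mapsto x+w_i$, hence descends to a smooth function on the tube $U_i^+$, with $Dg_i\equiv-p_i^{\perp}$ and $|g_i|\le C\del|p|$ there. Gluing these with cut-offs $\chi_i$ equal to $1$ on $\ol{U_i}$ and supported in $U_i^+$ produces $\phi_p:=\sum_i\chi_ig_i\in C^\infty(\T^n)$ with $\phi_p=g_i$ on $\ol{U_i}$ and $|D\phi_p|\le C|p|$ on $\T^n$, the transitions occurring only where $a\equiv\ep_0$. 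Then on $\ol{U_i}$, which contains every point at which $a>\ep_0$,
\[
a\,|p+D\phi_p|=a\,|p_i^{\parallel}|\le|v_i|\cdot\frac{|p\cdot w_i|}{|w_i|}=|p\cdot v_i|\le h_P(p),
\]
while off $\bigcup_iU_i$ one has $a\,|p+D\phi_p|\le\ep_0(1+C)|p|\le c_P|p|\le h_P(p)$ once $\ep_0(1+C)\le c_P$. Fixing $\ep_0$ this small, $\phi_p$ is a classical, hence viscosity, subsolution of $a(y)|p+D\phi_p|=h_P(p)$, so $\ol H(p)\le h_P(p)$; as $p$ was arbitrary, $D_a\subseteq P$.

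\emph{Proof that $P\subseteq D_a$.} Given $q\in P$, write $q=\sum_ic_iv_i$ with $c_i\ge0$ (replacing $v_i$ by $-v_i$ as needed) and $\sum_ic_i=\|q\|_P\le1$. For each $N$ consider the admissible path from $0$ that first reaches $\ell_1$ in bounded time (at most $\diam(\T^n)/\ep_0$, since $a\ge\ep_0$), then travels along the core $\ell_1$ at speed $|v_1|$ for time $Nc_1$ — so that its lift is displaced exactly by $Nc_1\lam_1w_1=Nc_1v_1$ — then crosses to $\ell_2$ in bounded time, travels along $\ell_2$ for time $Nc_2$, and so on through $\ell_m$, finally correcting a bounded terminal error in bounded time. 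Its total duration is $N\sum_ic_i+O(1)=N\|q\|_P+O(1)$, so $Nq\in\cR_{\,N\|q\|_P+O(1)}(0)$. Dividing by $N\|q\|_P+O(1)$ and letting $N\to\infty$, Lemma~\ref{lem:R-t-conv} yields $q/\|q\|_P\in D_a$; since $0\in D_a$ and $D_a$ is convex, $q\in D_a$. Hence $P\subseteq D_a$, and together with the previous step $D_a=P$.

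The step I expect to be the real obstacle is arranging the construction so that three things hold at once: the transport corridors $\ell_i$ stay pairwise disjoint (this is exactly what forces $n\ge3$), the corrector $\phi_p$ changes only where $a$ is tiny (so that $a\,|p+D\phi_p|$ is never inflated past $h_P(p)$), and each corridor-aligned linear corrector $g_i$ is genuinely single-valued on $\T^n$ — the last being automatic precisely because $p_i^{\perp}\perp w_i$. Once the geometry is set up correctly, obtaining $D_a=P$ exactly rather than approximately is free, since both the transport estimate along a corridor core and the subsolution bound are exact identities, not approximations.
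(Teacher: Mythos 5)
Your proposal is correct, and the construction itself is essentially the one in the paper: disjoint fast tubes around closed orbits in the vertex directions (with $n\ge 3$ used exactly where you use it, to choose shifts avoiding countably many $2$-planes), a slow background, and the corrector whose gradient equals $-p_i^\perp$ on each tube; your ``small $\ep_0$ background'' is just the rescaled version of the paper's ``large $A$ on the tubes,'' and your upper bound $\ol H\le h_P$ via the glued subsolution is the same argument as the paper's Step 2 upper bound. Where you genuinely diverge is the lower bound. The paper proves $\ol H(p)\ge A|p\cdot q_i|$ by a weak KAM--type argument: it takes a solution $v_p$ of ${\rm (E)}_p$, restricts $u=p\cdot x+v_p$ to the closed orbit $x_i+L_i$, and compares the exact increment $u(x_i+mq_i)-u(x_i)=mp\cdot q_i$ with the gradient bound $|Du|\le \ol H(p)/(A|q_i|)$ forced by the cell equation on the orbit (modulo a mollification the paper only sketches). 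You instead prove $P\subseteq D_a$ by explicit controllability: concatenating corridor runs of durations $Nc_i$ with $O(1)$ connectors and invoking Lemma \ref{lem:R-t-conv}. Both are sound; your route avoids the differentiability caveat for $v_p$ entirely and stays within the reachable-set framework, and as a bonus your $O(1)$ error bookkeeping is exactly the quantitative statement the paper later needs in Lemma \ref{lem:P-rate1} for the convergence rate (which the paper obtains there via backward characteristics). What the paper's weak KAM lower bound buys instead is an exact formula for $\ol H(p)$ itself, not just the set inclusion. No gaps; the only points you assert without proof --- that a smooth subsolution at level $\lambda$ forces $\ol H(p)\le\lambda$, and that $0\in D_a$ --- are standard and are used equally freely in the paper.
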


The set of centrally symmetric polytopes with rational vertices and nonempty interior is important since it is dense in the set of centrally symmetric convex sets with nonempty interior. 
The same result has also been proved   in  \cite{BB2006, J2009} in the equivalent form of stable norms. 
Constructions in these works are all based on Hedlund's original idea \cite{Hed} where a cubic $D$ is constructed. 
The main difference lies in the methods/computations involved in verifying that $D$ is indeed the desired polytope. 
Our proof based on PDE/weak KAM type approaches is much simpler than those delicate  geometric calculations in \cite{Hed,BB2006,J2009}.
This result  belongs to the ongoing project of studies of inverse problems in periodic homogenization of Hamilton-Jacobi equations (see \cite{LTY, JTY, TY}). 
 A very interesting question is whether every  centrally symmetric convex set with nonempty interior is realizable within the class of $a \in C(\T^n,(0,\infty))$.  
 A natural thought is to achieve this by approximations. 
 However, our construction actually shows that $\overline H$ is not stable under convergence of $a$ that is weaker than the uniform convergence (see Remark \ref{rem:unstability}). 
In addition, we would like to point out that, when $n=2$, polytopes are not realizable by $a\in C^1(\Bbb T^2,(0,\infty))$, because it is proved in  \cite{Car} that  $a \in C^1$ yields strictly convex limiting shape $D$ (equivalently, the dual set $\left\{\overline H=1\right\}$ is $C^1$). As long as the continuous class of $a$ is considered,  we believe that the above theorem should hold in two dimensions.  
We will investigate these problems in the future.  

\smallskip

Next is our optimal rate of convergence result in case that the effective front is a centrally symmetric polytope with nonempty interior.

\begin{thm}\label{thm:rate2}
Let $P \subset \R^n$ be a centrally symmetric polytope with nonempty interior.
Let $a \in C(\T^n, (0,\infty))$ be such that $D_a=P$.
Then, there exists a constant $C>0$ depending only on $a$ such that
\[
\|u^\ep -u\|_{L^\infty(\R^n \times [0,\infty))} \leq C \ep.
\]
\end{thm}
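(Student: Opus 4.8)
The plan is to establish the optimal $O(\ep)$ rate by combining the two characterizations of $\ol H$ available to us: the cell problem $\mathrm{(E)}_p$ and the support-function identity \eqref{eq:HD}. The starting point is the classical observation that for $\ep$-homogenization of first-order equations, the rate of convergence is governed by how well one can approximate, on the $\ep$-scale, the corrector $v_p$ solving $a(y)|p+Dv_p(y)|=\ol H(p)$, uniformly in the gradient variable $p$. When $P=D_a$ is a centrally symmetric polytope, $\ol H$ is the support function of $P$, hence piecewise linear: $\R^n$ decomposes into finitely many closed polyhedral cones $K_1,\dots,K_m$, on each of which $\ol H(p)=p\cdot q_j$ for a fixed vertex $q_j$ of $P$. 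The key structural fact I would exploit is that on the interior of each cone $K_j$, the corrector can be taken to be $v_p(y)=v_{q_j}(y)$ \emph{independent of $p$} (up to the scaling $\ol H(p)=p\cdot q_j$), because the eikonal inequality $a(y)|p+Dw(y)|\le p\cdot q_j$ that one needs is essentially the statement that $q_j\in D_a$, which by the shape theorem is equivalent to the existence of a single sublinear (indeed, bounded-slope) subsolution $w$ to $a(y)|q_j+Dw(y)|\le$ (a suitable constant) — this is the weak-KAM/metric reformulation already invoked in proving Lemma \ref{lem:R-t-conv} and Theorem \ref{thm:polytope}.

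With such $p$-independent (per cone) correctors in hand, the proof proceeds by the standard perturbed test function method of Evans, but now quantified. First I would fix the finitely many Lipschitz functions $w_1,\dots,w_m$ (one per cone, or rather one per vertex $q_j$ of $P$) satisfying $a(y)|q_j+Dw_j(y)|\le H_j$ in the viscosity sense on $\T^n$, with $H_j$ the relevant constant so that $\ol H(p)=p\cdot q_j$ on $K_j$ is recovered; note $\|w_j\|_{L^\infty(\T^n)}\le C$ since $w_j$ is periodic and Lipschitz with constant controlled by $\diam(P)$ and $\beta$. Then for the supersolution half of the comparison, one builds $u(x,t)+\ep\, w_j(x/\ep)$ as a perturbed test function against $u^\ep$ at a given contact point, observing that the gradient $Du(x,t)$ lies in some cone $K_j$ (if it lies on a shared face one uses either adjacent corrector, since both give valid inequalities there), and the $\ep w_j(x/\ep)$ correction absorbs exactly the oscillation of $a(x/\ep)$ at the $O(1)$-gradient scale; the error produced is $O(\ep)$ because $\|\ep w_j(\cdot/\ep)\|_{L^\infty}=O(\ep)$ and the corrector equation holds \emph{exactly} (no mollification error, no regularization parameter coupled to $\ep$). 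The subsolution half is symmetric, using the centrally symmetric structure of $P$ so that $-q_j$ is also a vertex with corrector $-w_j(-\cdot)$ or an analogous one. Combining the two one-sided estimates via the comparison principle for \eqref{HJ} and \eqref{HJ-ep}, together with Lipschitz regularity of $g$ and finite speed of propagation (so that the bound is uniform in $t$ and $x$), yields $\|u^\ep-u\|_{L^\infty(\R^n\times[0,\infty))}\le C\ep$.

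The matching lower bound $\gtrsim\ep$ — which is what makes the rate \emph{optimal} rather than merely $O(\ep)$ — I would obtain by testing against an explicit linear-in-$x$ initial datum $g(x)=p_0\cdot x$ with $p_0$ chosen in the interior of some cone $K_j$. Then $u(x,t)=p_0\cdot x-t\,\ol H(p_0)=p_0\cdot x-t\,(p_0\cdot q_j)$ exactly, while $u^\ep(x,t)=p_0\cdot x+\ep v_{p_0}(x/\ep)-t\,\ol H(p_0)$ is an exact solution of \eqref{HJ-ep} (the corrector ansatz is exact for linear data), so $u^\ep-u=\ep v_{p_0}(x/\ep)$, whose sup-norm is $\ep\|v_{p_0}\|_{L^\infty(\T^n)}$, and this is a fixed positive number as long as $a$ is non-constant — which is part of our standing hypotheses — since a constant corrector would force $a$ to be constant on the support of the relevant optimal trajectories. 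A localization/truncation argument replaces the unbounded linear datum by a compactly supported Lipschitz one without changing the local behavior, keeping us within the $\BUC\cap\Lip$ class.

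The main obstacle I anticipate is the rigorous justification of the \emph{$p$-uniform, $\ep$-uncoupled} nature of the correctors: for a general strictly convex $\ol H$ one only has correctors that degenerate (fail to be Lipschitz uniformly, or fail to exist in an exact sense) as $p$ approaches the boundary of the Aubry set's relevant region, and the perturbed test function error then couples to $\ep$ suboptimally (giving only $O(\ep|\log\ep|)$ or $O(\sqrt\ep)$). Here the polytope hypothesis is precisely what removes this: there are only finitely many relevant directions $q_1,\dots,q_m$, and for each the metric functional $S_a(\cdot,\cdot)$ associated with $a$ (the optimal time function) is, by the shape theorem, dominated by the piecewise-linear function $\max_j(q_j\cdot z)$ up to a bounded error, which is exactly a \emph{global, bounded} corrector. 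So the technical heart of the argument is converting the assertion "$P=D_a$" into the quantitative statement "for each vertex $q_j$ of $P$ there is a periodic Lipschitz $w_j$ with $a(y)|q_j+Dw_j|\le \ol H(q_j/|q_j|)\cdot$(normalization) in the viscosity sense", together with its reverse inequality on the complementary cone — and checking that the perturbed test function computation then closes with error exactly $O(\ep)$ uniformly in $(x,t)$ and across the finitely many cones, including on the measure-zero set of shared faces where one must check compatibility of the two candidate correctors. I expect this to be clean precisely because of finiteness, which is the whole point of the polytope case.
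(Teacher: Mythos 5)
Your strategy hinges on the claim that for each vertex $q_j$ of $P$ there is a single periodic Lipschitz $w_j$ serving as a corrector for \emph{every} $p$ in the interior of the normal cone $K_j$, so that the perturbed test function $u+\ep\, w_j(\cdot/\ep)$ closes with an exactly $O(\ep)$ error. This claim is not justified and is false for exact correctors: $a(y)|p+Dw_j(y)|=p\cdot q_j$ for all $p$ in a full-dimensional cone would force $p+Dw_j(y)$ to be parallel to $q_j$ with prescribed length simultaneously for all such $p$, which a single gradient field cannot achieve (already in the Hedlund construction of Section \ref{sec:Hed} the subsolution $\phi$ has $D\phi=-p_i^{\perp}$ on the tubes and genuinely depends on $p$). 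Your fallback reduction of ``$q_j\in D_a$'' to a single subsolution inequality $a(y)|q_j+Dw(y)|\le \mathrm{const}$ conflates the velocity vector $q_j$ (a point of $D$) with a gradient variable: $q_j\in D_a$ means $\ol{H}(p)\ge p\cdot q_j$ for \emph{all} $p$, a statement about all gradients at once. Moreover, even granting one-sided per-cone correctors, subsolution information only yields the ``easy'' half of the estimate (the analogue of $\cR_t(Y)/t\subset P+B_{C/t}$, which holds for any $a$); the substantive half --- that each vertex of $P$ is actually attained by admissible trajectories up to an $O(1)$ additive error --- cannot be extracted from subsolutions and is missing from your argument.

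The paper supplies precisely that missing half via backward characteristics rather than perturbed test functions: for $p$ in the interior of the normal cone of a vertex $v$, $\ol{H}$ is \emph{linear} near $p$, and combining the calibration identity for a backward characteristic $\xi$ of $v_p$ with $\ol{H}(\tilde p)=\ol{H}(p)+D\ol{H}(p)\cdot(\tilde p-p)$ for nearby $\tilde p$ gives $|\xi(t)/t-v|\le C/|t|$ (Lemma \ref{lem:vertex-rate}); convex combinations of these characteristics yield $P\subset \cR_t(Y)/t+B_{C/t}$ (Lemma \ref{lem:P-rate1}), the reverse inclusion follows by integrating the corrector inequality along arbitrary admissible paths (Lemma \ref{lem:P-rate2}), and the resulting $O(1/t)$ Hausdorff rate is transferred to $\|u^\ep-u\|_{L^\infty}\le C\ep$ through the Hopf--Lax/reachable-set representation formulas (Lemma \ref{lem:rate-connection}), not through a comparison with perturbed test functions. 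Finally, your lower-bound discussion is not needed for the stated result (an upper bound only), and the formula $u^\ep=p_0\cdot x+\ep v_{p_0}(x/\ep)-t\ol{H}(p_0)$ is not the solution with initial datum $p_0\cdot x$; its initial datum is off by $\ep v_{p_0}(x/\ep)$.
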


The first quantitative result in periodic homogenization of Hamilton-Jacobi equations was obtained in \cite{CDI} with rate $O(\ep^{1/3})$.
Afterwards, optimal rate of convergences $O(\ep)$ were derived in various convex settings in \cite{MTY}.
In particular, if the Hamiltonian $H=H(y,p)$ is homogeneous in the $p$ variable (e.g., $H(y,p)=a(y)|p|$ in our case) in two dimensions,  $O(\ep)$ holds for any Lipschitz continuous initial data $g(x)$ (see  \cite[Theorem 1.2]{MTY}). 
If $n\geq 3$, we believe that extra assumptions on $\overline H$ are necessary in general in order to obtain $O(\ep)$ although an example with a fractional convergence rate is still elusive. 
This theorem is a next development along the line of \cite{MTY} for $n\geq 3$. See Theorem 1.1 in \cite{MTY} for other more ``generic" assumptions.

\begin{rem}  The assumption  in Theorem \ref{thm:polytope} that the polytope $D$ has rational vertices is necessary in our constructive proof. 
It is not clear to us,  for smooth $a$,  whether $D_a$ being a polytope automatically implies that all vertices are rational vectors.  
According to \cite{BIK}, if $D$ is a polytope, no vertex could be an irrational vector (i.e., its coordinates are linearly independent over $\mathbb{Q}$).  
However, there are vectors which are neither rational nor irrational, for example $q=(1,2,\sqrt{3}) \in \R^3$. 
\end{rem}

\subsection*{Outline of the paper}
In Section \ref{sec:Hed}, we give the proof of Theorem \ref{thm:polytope}, which is a generalization of the classical Hedlund example.
In Section \ref{sec:op}, we prove the optimal convergence result (Theorem \ref{thm:rate2}).
In the Appendix, we present the results on large time average of reachable sets and the qualitative homogenization result (proofs of Lemma \ref{lem:R-t-conv} and Theorem \ref{thm:hom}).
Although Lemma \ref{lem:R-t-conv} and Theorem \ref{thm:hom} are classical and standard in the literature, for readers' convenience,  we provide their proofs based on the reachable set framework, which are quantifiable and more in line with the approaches used in this paper.

\subsection*{Notations}
Let $Y$ be the unit cell $[0,1]^n$ and $\tilde Y = - Y= [-1,0]^n$.
For any $x\in \R^n$, denote by $([x],\hat x)$ the unique pair in $\Z^n \times [0,1)^n$ such that $x=[x]+\hat x$.
Denote by $B_r(x), \ol{B}_r(x)$ the open ball, closed ball of center $x$, radius $r>0$ in $\R^n$, respectively. 
We write $B_r=B_r(0), \ol{B}_r= \ol{B}_r(0)$ for short.
For $E,F \subset \R^n$ nonempty and $c\in \R$, we set $E+F = \{x+y\,:\,x\in E, y\in F\}$, and $cF=\{cx\,:\,x\in F\}$.


\section{A generalization of the classical Hedlund example} \label{sec:Hed}
In this section, we assume that $n \geq 3$ and we provide a proof of Theorem \ref{thm:polytope}. Our goal is: we construct an $a\in C^\infty(\T^n, (0,\infty))$, such that the effective Hamiltonian $\ol{H}$ determined by ${\rm (E)}_p$ corresponds to the support function of the polytope $P$ given in Theorem \ref{thm:polytope}.

\begin{proof}[Proof of Theorem \ref{thm:polytope}]
Assume that the vertices of $P$ are $\pm q_1, \pm q_2, \ldots, \pm q_m$, which are rational vectors.
Denote by $L_i=\{tq_i\,:\,  t\in  \R \}$ for $1 \leq i \leq m$.
Since $P$ is convex and has nonempty interior, $q_1, q_2,\ldots, q_m$ are mutually non-parallel and 
\[
\text{span} \{q_1, q_2, \ldots, q_m\} = \R^n.
\]
As a result,
\begin{equation*}
\theta :=\min_{|p|=1}\max_{1\leq i\leq m}|p\cdot q_i|>0.
\end{equation*}

\emph{Step 1: Construction of $a$.} Let $x_1=0$. 
Iteratively, for $k\leq m-1$, choose
\[
x_{k+1}\in (0,1)^n\backslash \bigcup_{i=1}^{k}\left\{x_i+sq_{k+1}+tq_i+\Z^n:\ s,t\in \R\right\}.
\]
Then for such selected $x_1,x_2, \ldots, x_m\in (0,1)^n$,  we have that  for $i \neq j$,
\[
(x_i+L_i+\Z^n)\cap (x_j+L_j+\Z^n)=\emptyset.
\]
Due to the fact that $\{q_i\}$ are rational vectors, the projection of $\{x_i + L_i + \Z^n\}$ to the flat torus $\T^n$ is a closed orbit. 
As a result, we can choose a  sufficiently small number $\del \in (0,1/3)$  so that, for $i \neq j$,
\[
T_{\del,i} \cap T_{\del,j} = \emptyset,
\]
where
\[
T_{\del, i} = \{ x\in \R^n\,:\, d(x, x_i + L_i + \Z^n) \leq \del \}.
\]

Choose a smooth $\Z^n$-periodic function $a:\R^n \to (0,\infty)$ such that 
\[
\begin{cases}
a(x)=A|q_i|   \qquad &\text{on $x_i+L_i+\Z^n$ for $1\leq i \leq m$},\\
1\leq a(x)\leq A|q_i| \qquad &\text{on $T_{\delta, i}$ for $1\leq i \leq m$},\\
a(x)=1  \qquad &\text{on $\R^n\backslash \bigcup_{i=1}^{m}T_{\delta, i}$}.
\end{cases}
\]
Here, $A>0$ is a large positive constant to be determined later.
\smallskip

Next, for every unit vector $|p|=1$ and $1\leq i\leq m$, write
\[
p_{i}^{\perp}=p-{(p\cdot q_i)\over |q_i|^2}q_i,
\]
which is the projection of $p$ on the $(n-1)$-dimensional Euclidean space that is perpendicular to $q_i$. 
Apparently, we can construct  a smooth $\Z^n$-periodic function $\phi$  satisfying that 
\[
D\phi(x)=-p_{i}^{\perp} \qquad \text{in $T_{{\delta}, i}$ for all $1\leq i\leq m$},
\]
and
\[
\|D\phi\|_{L^\infty}\leq C_{\delta},
\]
for a constant  $C_{\delta}>0$ depending only on $\delta$, and $q_1, q_2,\ldots, q_m$. 
We now pick $A$ such that
\[
A\geq \max\left\{{1+C_{\delta}\over \theta}, {1\over \min_{1\leq i\leq m}|q_i|}\right\}.
\]

\medskip

\emph{Step 2: Characterization of the effective Hamiltonian}. Let $\ol{H}$ be determined by ${\rm (E)}_p$ with $a$ defined above. We claim that
\begin{equation}\label{H-bar-goal}
\overline H(p)=A\max_{1\leq i\leq m} |q_i\cdot p| \qquad \text{ for all $p\in \R^n$}.
\end{equation}

\smallskip

We only need to prove this claim for unit vectors $|p|=1$. Fix such a $p$. Firstly, by using $\phi$ above, it is clear that
\begin{align*}
\overline H(p)&\leq \max_{x\in \R^n}a(x)|p+D\phi(x)|\\
&\leq \max\left\{A\max_{1\leq i\leq m} |q_i\cdot p|,  1+C_{\delta}\right\}
=A\max_{1\leq i\leq m} |q_i\cdot p|.
\end{align*}
Secondly, let $v=v_p$ be a solution of (E)$_p$.
We assume $v \in C^1(\T^n)$ (to make this rigorous, one needs to do convolution with a standard mollifier, but we omit it here).
 Then, for each $1 \leq i \leq m$,
\[
A|q_i||p+Dv(x)|=\overline H(p)  \quad \text{for $x\in x_i+L_i+\Z^n$}.
\]
Denote by $u(x)=p\cdot x+v(x)$ for $x\in \R^n$.  
Choose $m\in \Z$ such that $mq_i\in \Z^n$. Then
\[
u(x_i+mq_i)-u(x_i)=mp\cdot q_i.
\] 
Since $|u(x_i+mq_i)-u(x_i)|\leq  m|q_i|\max_{x\in x_i+L_i}|Du(x)|$, we deduce that 
\[
\overline H(p)\geq A|p\cdot q_i|.
\]
Accordingly, \eqref{H-bar-goal} holds true.

\medskip

\emph{Step 3: The corresponding shape $D$}. Let $a_A(x)={a(x)\over A}$. Then by scaling the result of Step 2, the effective Hamiltonian $\ol{H}_A(p)$ determined by ${\rm(E)}_p$ with $a$ replaced by $a_A$ satisfies
\begin{equation*}
\ol{H}_A (p) = \max_{1\leq i\leq m} |q_i\cdot p| = \max_{q\in \{\pm q_1, \cdots, \pm q_m\}} q\cdot p = \max_{q \in P} q\cdot p, \qquad \text{ for all $p\in \R^n$}.
\end{equation*}
Compare with Theorem \ref{thm:hom} and in particular the relation \eqref{eq:HD}, we conclude that $P = D_{a_A}$. This completes the proof.
\end{proof}

 \begin{rem}\label{rem:unstability} 
From the constructions in the proof, we observe that, by properly choosing those rational vectors $q_i$ and $\delta$, it is not hard to construct a sequence $\{a_m(\cdot)\} \subset C^{\infty}(\T^n)$ such that 
\[
0<a_m\leq 1, \qquad \lim_{m\to \infty}a_m(x)=0 \quad \text{ for a.e. $x\in \T^n$},
\]
and
\[
\lim_{m \to \infty}\overline H_m(p)=|p| \quad \text{locally uniformly in $\R^n$}.
\]
\end{rem}


\section{The optimal rate of convergence} \label{sec:op}
This section is strongly motivated by \cite{MTY}.
It is clear from the proof of Theorem \ref{thm:hom} in the Appendix that, in order to quantify the rate of convergence of $u^\ep$ to $u$, we need a quantitative version of \eqref{K-conv}.
The following result, which is similar to \cite[Lemma 2.1]{MTY}, shows this connection.

\begin{lem}\label{lem:rate-connection}
Let $D$ be the limit shape of Lemma \ref{lem:R-t-conv}. Assume that there exists $C>0$ such that
\begin{equation}\label{R-t-rate}
\rho\left(\frac{\cR_t(Y)}{t}, D \right) \leq \frac{C}{t} \quad \text{ for } t>0.
\end{equation}
Then, there exists $C>0$ such that
\begin{equation}
\label{eq:hom_err}
\|u^\ep-u\|_{L^\infty(\R^n \times [0,\infty))} \leq C \ep.
\end{equation}
\end{lem}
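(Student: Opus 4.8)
The plan is to read off both $u^\ep$ and $u$ from reachable-set (optimal-control) representations, and then turn the Hausdorff rate \eqref{R-t-rate} into a uniform $O(\ep)$ comparison of the two relevant sets. Recall that the unique viscosity solution of \eqref{HJ-ep} is
\[
u^\ep(x,t)=\inf\{g(y)\,:\,y\in\cR^\ep_t(x)\},\qquad \cR^\ep_t(x):=\ep\,\cR_{t/\ep}(x/\ep),
\]
where the second identity is the scaling $y\mapsto\ep y$, $s\mapsto\ep s$ applied to admissible paths; this is precisely the representation used in the Appendix to prove Theorem \ref{thm:hom}. Since $\ol H$ is the support function of $D$ by \eqref{eq:HD}, the Hopf--Lax formula for \eqref{HJ} gives $u(x,t)=\inf\{g(y)\,:\,y\in x+tD\}$ (using $D=-D$). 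As $g$ is Lipschitz, $|\inf_E g-\inf_F g|\le\|Dg\|_{L^\infty}\,\rho(E,F)$ for nonempty compact $E,F$, so \eqref{eq:hom_err} follows once I prove
\[
\rho\bigl(\cR^\ep_t(x),\,x+tD\bigr)\le C_1\,\ep\qquad\text{for all }x\in\R^n,\ t>0,\ \ep\in(0,1),
\]
with $C_1$ depending only on $a$.

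First I reduce to the unit cell: writing $x/\ep=[x/\ep]+z$ with $z:=\widehat{x/\ep}\in[0,1)^n\subset Y$, the $\Z^n$-periodicity of $a$ gives $\cR_{t/\ep}(x/\ep)=[x/\ep]+\cR_{t/\ep}(z)$, so $\cR^\ep_t(x)=x-\ep z+\ep\,\cR_{t/\ep}(z)$ with $|\ep z|\le\sqrt n\,\ep$. The \emph{upper} inclusion is immediate: $\cR_{t/\ep}(z)\subset\cR_{t/\ep}(Y)$ and \eqref{R-t-rate} give $\cR_{t/\ep}(Y)\subset(t/\ep)D+\ol{B}_{C}$, whence, after scaling by $\ep$ and translating, $\cR^\ep_t(x)\subset x+tD+\ol{B}_{(C+\sqrt n)\ep}$. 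The \emph{lower} inclusion $x+tD\subset\cR^\ep_t(x)+\ol{B}_{C_1\ep}$ is the crux, because \eqref{R-t-rate} bounds only the \emph{union} $\cR_{t/\ep}(Y)$, not the reachable set from a single base point. To bridge this I use $a\ge\alpha>0$: moving at constant speed $\alpha$ shows $\ol{B}_{\alpha\tau}(z)\subset\cR_\tau(z)$, so with $\tau_0:=\sqrt n/\alpha$ one has $Y\subset\cR_{\tau_0}(z)$ for every $z\in Y$, and hence, concatenating admissible paths, $\cR_s(z)\supset\cR_{s-\tau_0}(Y)$ for $s>\tau_0$. Applying \eqref{R-t-rate} at time $s-\tau_0$ and absorbing $\tau_0 D\subset\ol{B}_{\tau_0 R}$ (with $R:=\sup_{q\in D}|q|$), using $sD=(s-\tau_0)D+\tau_0 D$ for the convex set $D$, yields $sD\subset\cR_s(z)+\ol{B}_{C+\tau_0 R}$; taking $s=t/\ep>\tau_0$, scaling by $\ep$ and translating gives the lower inclusion with $C_1:=C+\tau_0 R+\sqrt n$.

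Finally, the regime $t/\ep\le\tau_0$ is handled crudely: admissible paths satisfy $|\dot\gam|\le\beta$, so $\cR^\ep_t(x)\subset\ol{B}_{\beta t}(x)$, while $x+tD\subset\ol{B}_{Rt}(x)$; hence both $u^\ep(x,t)$ and $u(x,t)$ lie within $\|Dg\|_{L^\infty}(\beta+R)\tau_0\,\ep$ of $g(x)$, which again gives an $O(\ep)$ bound. Combining the two regimes yields \eqref{eq:hom_err} with a constant depending only on $a$ and $\|Dg\|_{L^\infty}$. The one genuinely delicate step is the lower inclusion, i.e.\ upgrading a rate for the fattened set $\cR_t(Y)$ to one that is uniform over base points of a fixed cell; the finite-time filling of $Y$ afforded by $a\ge\alpha$ is exactly what makes this uniform, and everything else is bookkeeping with the Hausdorff distance and the scaling.
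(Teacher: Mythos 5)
Your proof is correct and follows the same overall route as the paper: represent $u^\ep$ and $u$ via reachable sets and the Hopf--Lax formula (the identities \eqref{u-HL} and \eqref{u-ep-rep} of the Appendix), convert the hypothesis \eqref{R-t-rate} into an $O(\ep)$ Hausdorff comparison of the two sets over which $g$ is minimized, and transfer this to the infima using that $g$ is Lipschitz. The paper's own proof is two lines: it rescales \eqref{R-t-rate} to $\rho(\ep\,\cR_{t/\ep}(Y),tD)\le C\ep$ and then says to insert this into \eqref{u-HL} and \eqref{u-ep-rep}. What you add, and what the paper leaves implicit, is the passage from a rate for the \emph{union} $\cR_{t/\ep}(Y)$ to a rate for $\cR_{t/\ep}(z)$ with $z=\frac{x}{\ep}-[\frac{x}{\ep}]$ a single point of $Y$, which is the set actually appearing in \eqref{u-ep-rep}. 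Your filling argument ($\ol{B}_{\alpha\tau}(z)\subset\cR_\tau(z)$, hence $Y\subset\cR_{\tau_0}(z)$ with $\tau_0=\sqrt n/\alpha$, hence $\cR_s(z)\supset\cR_{s-\tau_0}(Y)$ by concatenation) closes this gap at the cost of an extra additive constant $\tau_0\sup_{q\in D}|q|$ absorbed into $C$; the complementary inclusion $\cR_{t/\ep}(z)\subset\cR_{t/\ep}(Y)$ is trivial, and the translation error $|\ep z|\le\sqrt n\,\ep$ and the short-time regime $t\le\tau_0\ep$ are handled correctly (the latter using $0\in D$ and $x\in\cR^\ep_t(x)$). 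So your write-up is a strictly more complete version of the paper's argument rather than a genuinely different one.
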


\begin{proof} 
By \eqref{R-t-rate}, 
\[
 \rho \left( \frac{\cR_{t/\ep}(Y)}{t/\ep},D\right)= \rho \left( \ep \frac{\cR_{t/\ep}(Y)}{t},D\right) \leq \frac{C}{t/\ep} = \frac{C\ep}{t},
\]
which implies
\[
 \rho \left( \ep \cR_{t/\ep}(Y),tD\right) \leq C \ep.
\]
We then use this result in \eqref{u-HL} and \eqref{u-ep-rep} to conclude the proof.
\end{proof}


\subsection{Proof of Theorem \ref{thm:rate2}}
We first give a definition on backward characteristics (see calibrated curves in \cite{Fa}, backward characteristics in \cite[Chapter 5]{Tr}).  

\begin{defn} \label{def:back}
For each $p\in  \R^n$,  let $v_p \in \Lip(\T^n)$ be a viscosity solution to the cell problem {\rm (E)$_p$}.
Then, $\xi: (-\infty, 0]\to \R^n$ is called a backward characteristic of $v_p$ if 
\begin{equation}\label{char}
p\cdot \xi (t_1)+v_p(\xi (t_1))-p\cdot \xi (t_2)-v_p(\xi (t_2))=\int_{t_2}^{t_1}L(\xi (t),\dot \xi(t))+\overline H(p)\,dt
\end{equation}
for all $t_2<t_1\leq 0$. Here $L$ is the Legendre transform of the Hamiltonian $H$.
\end{defn}
\noindent For our problem \eqref{HJ-ep}, the Hamiltonian $H(y,p)=a(y)|p|$ only has linear growth at infinity. The corresponding Lagrangian $L$ is defined by
\begin{equation*}
L(y,v) = \sup_{p \in \R^n} \left( v\cdot p - H(y,p) \right) = \begin{cases}
0 &\qquad |v| \le a(y),\\
+\infty &\qquad |v| > a(y).
\end{cases}
\end{equation*}
By approximating $H$ by smooth and strictly convex Hamiltonians, it is easy to show that for each $p\in \R^n$, 
there exists a viscosity solution $v_p \in \Lip(\T^n)$ of (E)$_p$ such that for every $y\in  \R^n$, 
there is a backward characteristic $\xi_y$ of $v_p$ with $\xi_y(0)=y$. Of course, due to \eqref{char}, $L(\xi_y(s),\dot \xi_y(s))=0$ for all $s<0$.
Hence, $|\dot \xi_y(s)| \leq a(\xi_y(s))$, which yields $\xi_y(s) \in \cR_{|s|}(y)$  for all $s<0$.
In other words, \eqref{char} for $\xi_y$ can be written in an equivalent way as following
\begin{equation}\label{char-n}
\begin{cases}
\xi_y(s) \in \cR_{|s|}(y)  &\text{for } s<0,\\
p\cdot \xi (t_1)+v_p(\xi (t_1))-p\cdot \xi (t_2)-v_p(\xi (t_2))=(t_1-t_2) \ol{H}(p) &\text{for } t_2<t_1 \leq 0.
\end{cases}
\end{equation}

\begin{lem}\label{lem:vertex-rate} 
Assume all the hypotheses of Theorem \ref{thm:rate2}.
Let $v$ be a vertex of $P$.
Pick $p \in \R^n$ such that $\ol{H}$  is differentiable at $p$ and $D\ol{H}(p)=v$.
Let  $\xi :(-\infty, 0]\to \R^n$ be a  backward characteristic of $v_p\in \Lip(\T^n)$, a solution of {\rm (E)}$_p$.
Then, $\xi(t) \in \cR_{|t|}(\xi(0))$ for all $t<0$. Furthermore, there exists $C_p>0$ depending on $H, \ol{H}, p$ such that
\begin{equation}\label{v-rate}
\left |{\xi(t)-\xi(0)\over t}- v\right|\leq {C_p\over |t|}  \quad \text{for $t <0$}.
\end{equation}
\end{lem}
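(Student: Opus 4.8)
The plan is to exploit the fact that $v$ is a \emph{vertex} of the polytope $P$, which makes $\ol H$ locally conical around $p$: since $\ol H$ is the support function of $P$ and $P$ is a polytope, for $q$ in a neighborhood of $p$ we have $\ol H(q)=\max_{1\le j\le N} q\cdot w_j$ where $w_1,\dots,w_N$ are the vertices of $P$; differentiability of $\ol H$ at $p$ with $D\ol H(p)=v$ means $v$ is the unique maximizer, i.e. $p\cdot v > p\cdot w_j$ for all $w_j\ne v$. Consequently there is a constant $c_p>0$ and a radius $r_p>0$ such that
\begin{equation*}
\ol H(q) \ge q\cdot v + c_p\,|q-p| \quad \text{is false in general, but} \quad \ol H(q)\ge q\cdot v \text{ with } \ol H(q)- q\cdot v \ge c_p\,|\pi(q-p)|,
\end{equation*}
where $\pi$ is the projection onto the span of $\{w_j - v\}$; more usefully, the \emph{subdifferential-gap} estimate $\ol H(q) - \ol H(p) - (q-p)\cdot v \ge 0$ holds for all $q$, with strict linear growth in the directions that distinguish $v$ from the other vertices. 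The point is that $v$ lies on many faces of the dual, so $\ol H$ is ``very non-smooth'' at $p$ and the characteristic through $\xi$ is strongly pinned.

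The core of the argument: set $u(x)=p\cdot x + v_p(x)$, so that along the backward characteristic \eqref{char-n} gives $u(\xi(t_1)) - u(\xi(t_2)) = (t_1-t_2)\ol H(p)$ for $t_2<t_1\le 0$, and recall $v_p$ is $\Z^n$-periodic and bounded, say $\|v_p\|_{L^\infty(\T^n)}\le M_p$. First I would show $\xi(t)\in\cR_{|t|}(\xi(0))$, which is exactly the content of \eqref{char-n} (the backward characteristic is an admissible path traversed backwards). Then, to get \eqref{v-rate}, I test the calibration against an arbitrary competitor: for any $q\in\R^n$, one has $q\cdot\xi(t) + v_q(\xi(t)) - q\cdot\xi(0) - v_q(\xi(0)) \ge \int_t^0 L(\xi,\dot\xi)\,ds + t\,\ol H(q) = t\,\ol H(q)$ (using $L=0$ along $\xi$ and $t<0$, so this is a lower bound of the form $\ge t\ol H(q)$, i.e. $\le |t|\ol H(q)$ after sign care); combining with the exact identity for $v_p$ and the boundedness of $v_p, v_q$ yields
\begin{equation*}
\left| q\cdot\frac{\xi(t)-\xi(0)}{t} - \ol H(q) \right| \le \frac{2(M_p+M_q)}{|t|}
\end{equation*}
for all $q$, and in particular for $q=p$, where $\ol H(p)=p\cdot v$. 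This already controls the component of $\frac{\xi(t)-\xi(0)}{t}$ along $p$. To control the full vector I would apply the same inequality with $q = p \pm \eta e$ for small $\eta$ and each coordinate direction $e$; since $\ol H$ is \emph{piecewise linear} near $p$ with $v$ the unique active vertex, for $\eta$ small enough $\ol H(p\pm\eta e) = (p\pm\eta e)\cdot v$ exactly, so differencing gives $\left| e\cdot\frac{\xi(t)-\xi(0)}{t} - e\cdot v \right| \le \frac{C}{\eta|t|}$, and summing over a basis produces \eqref{v-rate} with $C_p$ depending on $p$, $\ol H$, $M_p$, and $H$ through the $M_q$'s.

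The main obstacle — and the place the argument needs care — is establishing that $\ol H$ is genuinely linear (not merely differentiable) in a full neighborhood of $p$, with the radius of linearity and the ``spectral gap'' $c_p$ explicitly controllable; this is where being a vertex of a polytope, as opposed to a generic smooth point, is essential, and it is what forces the $O(1/|t|)$ rate rather than something slower. A secondary technical point is the uniform bound $M_q = \|v_q\|_{L^\infty}$ for $q$ near $p$: one needs that solutions of (E)$_q$ can be chosen bounded uniformly in $q$ on a neighborhood, which follows from the standard Lipschitz bound $\|Dv_q\|_{L^\infty}\le \ol H(q)/\alpha$ together with $\ol H$ being locally bounded, normalizing $v_q$ to vanish at a fixed point. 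Once these two facts are in hand, the calibration inequality does all the work and the estimate \eqref{v-rate} drops out by linear algebra.
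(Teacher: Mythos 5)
Your proposal is correct and follows essentially the same route as the paper: the exact calibration identity for $p$, the domination inequality for competitors near $p$, local linearity of $\ol H$ around $p$ (since $v$ is a vertex and $\ol H$ is differentiable there), and a choice of test directions to turn the resulting one-sided bound into \eqref{v-rate}. One caveat: your displayed two-sided estimate $\left|q\cdot\frac{\xi(t)-\xi(0)}{t}-\ol H(q)\right|\le \frac{2(M_p+M_q)}{|t|}$ is false for general $q$ (the domination inequality controls only one sign), but since you only invoke it for $q=p\pm\eta e$ inside the linearity neighborhood and use both signs, the argument still closes --- the paper instead tests the single direction $\tilde p=p+\frac{r}{2}\frac{w}{|w|}$ with $w=\frac{\xi(t)-\xi(0)}{t}-v$.
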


\begin{proof}  
In fact, $\ol{H}$ is linear in a neighborhood of $p$, say $B(p,r)$, and $D^2 \ol{H}(p)=0$.

By the definition of backward characteristics and \eqref{char-n}, $\xi(t) \in \cR_{|t|}(\xi(0))$, and
\[
p\cdot \xi(0)-p\cdot \xi(t)+v_p(\xi (0))-v_p(\xi (t))=|t| \overline H(p),
\]
 for all $t<0$.
Next, for $\tilde p\in  \R^n$, let $v_{\tilde p} \in \Lip(\T^n)$ be a solution to (E)$_{\tilde p}$.  It is clear that 
\[
\tilde p\cdot \xi(0)-\tilde p\cdot \xi(t)+v_{\tilde p}(\xi (0))-v_{\tilde p}(\xi (t))\leq \int_{t}^{0}L( \xi(s),\dot\xi(s) )+\overline H(\tilde p)\,ds= |t| \ol{H}(\tilde p).
\]
It suffices to prove \eqref{v-rate} for $t\leq -1$. Set
\[
w={{\xi (t)-\xi(0)\over t}}-D\overline H(p)={\xi(t)-\xi(0)\over t}- v.
\]
There is nothing to prove if $w=0$. We therefore may assume that $w \neq 0$.
Accordingly, for $|\tilde p-p| < r$, 
\begin{equation}\label{conv-rot-2}
\overline H(\tilde p)-\overline H(p)\geq  {(\tilde p-p)\cdot {\xi (t)-\xi(0)\over t}}-{{C(1+|p|)}\over |t|}.
\end{equation}
Since $\ol H$ is linear in $B(p,r)$, for $|\tilde p-p| <r$, 
\[
\ol{H}(\tilde p)  = \ol{H}(p) + D\ol{H}(p)\cdot (\tilde p - p).
\]
Combine this with the above inequality to deduce that
\[
  (\tilde p-p)\cdot \left( {{\xi (t)-\xi(0)\over t}}-D\overline H(p)\right) \leq {{C(1+|p|)}\over |t|}.
\]
Choose $\tilde p=p+{r \over 2} {w\over  |w|} $ to complete the proof.
\end{proof}

\begin{lem}\label{lem:P-rate1} 
Assume all the hypotheses of Theorem \ref{thm:rate2}.
Then, there exists $C>0$ such that
\[
P \subset \frac{\cR_t(Y)}{t} + B_{\frac{C}{t}}.
\]
\end{lem}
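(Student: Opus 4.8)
The plan is to derive the inclusion from the vertex estimate of Lemma~\ref{lem:vertex-rate} by a path‑concatenation argument that exploits the $\Z^n$‑periodicity of $a$ and the uniform lower bound $a\ge\alpha>0$. Write $v_1,\dots,v_N$ for the vertices of $P$, so $P$ is their convex hull, and recall that by \eqref{eq:HD} and $D_a=P$ the effective Hamiltonian $\ol H$ is the support function of $P$.

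\emph{Step 1: a reachability estimate at each vertex.} For a vertex $v_j$ the normal cone of $P$ at $v_j$ is $n$‑dimensional, and $\ol H(p)=p\cdot v_j$ for $p$ in a neighborhood of any point $p_j$ in its interior; hence $\ol H$ is (linear, so) differentiable at $p_j$ with $D\ol H(p_j)=v_j$, and Lemma~\ref{lem:vertex-rate} provides a solution $v_{p_j}$ of $({\rm E})_{p_j}$ and a backward characteristic $\xi_j$ of it with $\xi_j(-s)\in\cR_s(\xi_j(0))$ and $|\xi_j(0)-\xi_j(-s)-s v_j|\le C_{p_j}$ for all $s>0$. Since the constraint $|\dot\gamma|\le a(\gamma)$ is invariant under time reversal, also $\xi_j(0)\in\cR_s(\xi_j(-s))$; writing $\xi_j(-s)=[\xi_j(-s)]+\widehat{\xi_j(-s)}$ with $\widehat{\xi_j(-s)}\in Y$ and using $\cR_s(x+k)=\cR_s(x)+k$ for $k\in\Z^n$, the point $\xi_j(0)-[\xi_j(-s)]$ lies in $\cR_s(Y)$ and within $C_{p_j}+\sqrt n$ of $s v_j$. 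Thus, with $C_0:=\sqrt n+\max_j C_{p_j}$,
\[
s v_j\in\cR_s(Y)+\ol{B}_{C_0}\qquad\text{for every vertex }v_j\text{ and every }s>0.
\]

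\emph{Step 2: gluing.} Next I would show that if $z_i\in\cR_{t_i}(Y)$ for $i=1,\dots,k$ then $z_1+\dots+z_k\in\cR_T(Y)+\ol{B}_{(k-1)\sqrt n}$ with $T=t_1+\dots+t_k+(k-1)\tau_0$ and $\tau_0:=\sqrt n/\alpha$. This is an induction on $k$: given admissible paths from $y_1\in Y$ to $z_1$ and from $y_2\in Y$ to $z_2$, translate the second by the lattice vector $[z_1]$ (legitimate by periodicity), bridge from $z_1$ to $y_2+[z_1]$ by a straight segment of length $\le\sqrt n$ traversed at speed $\alpha\le a$ in time $\le\tau_0$, and concatenate; the resulting admissible path starts in $Y$, takes total time $\le t_1+\tau_0+t_2$, and ends within $\sqrt n$ of $z_1+z_2$. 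Iterating, with the errors adding up, gives the general case.

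\emph{Step 3: conclusion.} Given $y\in tP$, write $y/t=\sum_{i=1}^k\lambda_i v_{j_i}$ as a convex combination of at most $N$ vertices; for $t>(N-1)\tau_0$ set $t_i:=\lambda_i(t-(N-1)\tau_0)$, so $\sum_i t_i+(k-1)\tau_0\le t$. Taking $w_i\in\cR_{t_i}(Y)$ from Step~1 with $|w_i-t_i v_{j_i}|\le C_0$ and applying Step~2 together with the monotonicity of $\cR_t(Y)$ in $t$, one obtains a point of $\cR_t(Y)$ within $(N-1)\sqrt n+NC_0$ of $\sum_i t_i v_{j_i}=(t-(N-1)\tau_0)(y/t)$, and the latter differs from $y$ by at most $(N-1)\tau_0\max_j|v_j|$. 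Summing these $O(1)$ errors produces a fixed $C$ with $tP\subset\cR_t(Y)+\ol{B}_C$ for $t>(N-1)\tau_0$; for $t\le(N-1)\tau_0$ the inclusion is immediate since $0\in Y\subset\cR_t(Y)$ and $P$ is bounded. Dividing by $t$ gives the claim. I expect the point requiring care to be the time/displacement bookkeeping in Step~2 and its iteration — keeping the bridging and direction‑switching costs $O(1)$ uniformly in $t$ while ensuring the accumulated time stays $\le t$ — which is precisely where the finiteness of the vertex set, the periodicity of $a$, and the bound $a\ge\alpha$ all enter.
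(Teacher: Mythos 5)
Your argument is correct and follows essentially the same route as the paper: obtain, via Lemma \ref{lem:vertex-rate}, admissible paths landing within $O(1)$ of $sv_j$ for each vertex, then realize an arbitrary convex combination by concatenating these paths with the $O(1)$-time connectors of \eqref{eq:connector}. The paper states this concatenation in one line, deferring to the appendix's gluing construction; your Steps 2--3 simply carry out that bookkeeping (time budget, lattice translations, accumulated $O(1)$ errors) explicitly, and your time-reversal device in Step 1 is an equivalent variant of the paper's forward parametrization of the backward characteristics.
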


\begin{proof}
Let $v_1,\ldots, v_k$ be the vertices of $P$.
By Lemma \ref{lem:vertex-rate}, there exist $\xi_1(\cdot), \ldots, \xi_k(\cdot)$ and a constant $C>0$ such that
\[
\begin{cases}
\xi_i(t) \in \cR_t(Y) \quad &\text{ for all } 1\leq i \leq k,\, t>0,\\
|\xi_i(t) - t v_i| \leq C \quad &\text{ for all } 1\leq i \leq k,\, t>0.
\end{cases}
\]
For any point $v \in P$, $v$ can be written as a convex combination of the vertices $v_1,\ldots, v_k$, that is,
\[
v = \al_1 v_1 + \cdots + \al_k v_k,
\]
for some $\al_1,\ldots, \al_k \geq 0$ and $\sum_{i=1}^k \al_i=1$.
We construct $\xi(\cdot)$ as a convex combination of the paths $\xi_1(\cdot), \ldots, \xi_k(\cdot)$ (roughly, $\al_1 \xi_1(\cdot)+ \cdots + \al_k \xi_k(\cdot)$ with connectors as in \eqref{eq:connector}) so that
\[
\xi(t) \in \cR_t(Y), \ |\xi(t) - tv| \leq C \quad \text{ for all } t>0.
\]
The proof is complete.
\end{proof}

We now need to obtain the converse inclusion.
It is extremely important noting that the following result always holds true in general setting without any restriction on $P$.
This result is similar to \cite[lower bound (1.2)]{MTY}.

\begin{lem}\label{lem:P-rate2} 
There exists $C>0$ such that
\[
 \frac{\cR_t(Y)}{t} \subset P+ B_{\frac{C}{t}}.
\]
\end{lem}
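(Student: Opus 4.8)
The content of this lemma is that the reachable set from the unit cell, renormalized by time, overshoots the limit shape $P$ by at most $O(1/t)$. The natural approach is to use the support-function characterization: since $P$ is compact and convex, $y \in P + B_{C/t}$ for all $y$ in a set $E$ is equivalent to $\sup_{y \in E} p\cdot y \le \ol H(p) + C/t$ for every unit vector $p$, where we use \eqref{eq:HD}. So it suffices to show that for each unit vector $p$,
\[
\sup_{y \in \cR_t(Y)} p \cdot y \le t\,\ol H(p) + C
\]
with $C$ independent of $p$ and $t$. The plan is to certify this bound by testing against the corrector: let $v_p \in \Lip(\T^n)$ solve ${\rm (E)}_p$, so that $a(y)|p + Dv_p(y)| \le \ol H(p)$ in the viscosity sense, and set $u(x) = p\cdot x + v_p(x)$, a subsolution of $a(x)|Du| = \ol H(p)$ on $\R^n$.

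First I would record that $u$ is a (viscosity, hence by the eikonal structure also a.e.\ along Lipschitz curves) subsolution, so along any admissible path $\gam \in \cA_{0,t}$ one has $\frac{d}{ds} u(\gam(s)) \le |Du(\gam(s))|\,|\dot\gam(s)| \le |Du(\gam(s))|\,a(\gam(s)) \le \ol H(p)$ for a.e.\ $s$; integrating gives $u(\gam(t)) - u(\gam(0)) \le t\,\ol H(p)$. (The rigorous version of this step is the standard optimal-control/representation argument: $u(x) = \inf\{u(\gam(0)) + t\,\ol H(p) : \gam \in \cA_{0,t},\ \gam(t) = x\}$ is the formula behind \eqref{u-HL}, and the inequality direction we need is the easy one. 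If one prefers, mollify $v_p$ as is done elsewhere in the paper.) Writing $u(x) = p\cdot x + v_p(x)$ and using $\|v_p\|_{L^\infty(\T^n)} \le C_0$, this yields, for any $y \in \cR_t(Y)$ with witnessing path $\gam$ ($\gam(0) \in Y$, $\gam(t) = y$),
\[
p\cdot y = u(y) - v_p(y) \le u(\gam(0)) + t\,\ol H(p) - v_p(y) \le p\cdot \gam(0) + 2C_0 + t\,\ol H(p) \le t\,\ol H(p) + (2C_0 + \sqrt n),
\]
since $|\gam(0)| \le \sqrt n$ on $Y$ and $|p| = 1$. Taking the supremum over $y$ and then over unit $p$ gives the claim with $C = 2\sup_p \|v_p\|_{L^\infty(\T^n)} + \sqrt n$, and this supremum is finite because $\|v_p\|_{L^\infty}$ is bounded uniformly in $p$ (e.g.\ normalizing $\min v_p = 0$, the oscillation of any corrector is controlled by $\ol H(p)/\alpha \le \beta|p|/\alpha$ via the eikonal bound). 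Converting back to Hausdorff distance: $\frac{\cR_t(Y)}{t} \subset P + B_{C/t}$ follows from the support-function inequality since $P$ is convex.

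The main obstacle — really the only delicate point — is making the ``subsolution decreases along admissible trajectories'' step fully rigorous when $v_p$ is merely Lipschitz and $a$ merely continuous, so that $Du$ exists only a.e. This is handled exactly as in the cited framework: either invoke the optimal-control representation of solutions of ${\rm (E)}_p$ / of the Cauchy problem (the same representation underlying \eqref{u-HL} and \eqref{u-ep-rep} used in Lemma \ref{lem:rate-connection}), or mollify $v_p$ to $v_p^\eta$, incurring an error $o_\eta(1)$ in the eikonal inequality that is absorbed into the constant and sent to $0$ at the end. No properties of $P$ beyond convexity and compactness are used, which is why the lemma, as the paper emphasizes, holds in full generality; the polytope hypothesis enters only in the companion Lemma \ref{lem:P-rate1}.
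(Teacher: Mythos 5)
Your proposal is correct and follows essentially the same route as the paper: both arguments test an admissible path $\gam\in\cA_{0,t}$ against the corrector via $\ol H(p)=a(\gam(s))|p+Dv_p(\gam(s))|\ge \dot\gam(s)\cdot(p+Dv_p(\gam(s)))$, integrate to get $p\cdot\gam(t)\le t\,\ol H(p)+C$, and invoke mollification of $v_p$ for rigor. The only cosmetic difference is the final conversion to the set inclusion (you use the support-function duality for $P+\ol B_{C/t}$ directly, while the paper writes the endpoint as $\lam z$ with $z\in\partial P$ and bounds $\lam/t$), and your remark on the uniform bound for $\|v_p\|_{L^\infty}$ over unit $p$ is a worthwhile detail the paper leaves implicit.
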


\begin{proof}
Let $z$ be a point on the boundary of $P$. For given $t>0$, consider $\gam:[0,t] \to \R^n$ such that
\[
\begin{cases}
\gam(0) \in Y, \gam(t) = \lam z,\\
|\dot \gam(s)| \leq a(\gam(s)) \quad \text{ for a.e. } s \in (0,t),
\end{cases}
\]
where $\lam>0$ is given. We now proceed to estimate how big $\lam$ can be.
For each $p\in P$, let $v_p \in \Lip(\T^n)$ be a solution of (E)$_p$. Then,
\[
\ol{H}(p) = a(\gam(s))|p+Dv_p(\gam(s))| \geq \dot \gam(s) \cdot (p + Dv_p(\gam(s))) \quad \text{ for a.e. } s \in (0,t).
\]
To make the above rigorous, one needs to do convolution with a standard mollifier, but we omit it here.
Integrate this on $[0,t]$ to deduce that
\[
t \ol{H}(p) \geq \int_0^t \dot \gam(s) \cdot (p + Dv_p(\gam(s))) \,ds= p\cdot (\lam z  - \gam(0)) + v_p(sz) - v_p(\gam(0)),
\]
which means
\[
p \cdot\frac{\lam z}{t} - \ol{H}(p) \leq \frac{C}{t}.
\]
If $\lam \leq 1$, then $\gam(t) \in P$.
Otherwise, for $\lam>1$, we take the supremum of the above over $p \in P$ to yield
\begin{equation*}
c\left(\frac{\lam}{t}-1\right) \leq \sup_{p\in P} \left( p \cdot \frac{\lam z}{t} - \ol{H}(p)\right) \leq \frac{C}{t},
\end{equation*}
for some $c>0$. Thus, we arrive at
\[
\frac{\lam}{t} \leq 1 + \frac{C}{t},
\]
which gives the conclusion.
\end{proof}

\begin{proof}[Proof of Theorem \ref{thm:rate2}]
Thanks to Lemmas \ref{lem:P-rate1} and \ref{lem:P-rate2}, we infer that
\[
\rho\left(\frac{\cR_t(Y)}{t}, D \right) \leq \frac{C}{t} \quad \text{ for } t>0.
\]
We then apply Lemma \ref{lem:rate-connection} to conclude the proof.
\end{proof}

\appendix

\section{Homogenization through the shape theorem}
 \label{sec:Rset}

In this appendix, we give quick proofs of Lemma \ref{lem:R-t-conv} and Theorem \ref{thm:hom}, which are classical and standard in the literature. 
The proof of Lemma \ref{lem:R-t-conv} follows closely the ideas in \cite{JST1}.
Once \eqref{R-t-conv} is proved, homogenization of \eqref{HJ-ep} follows in a quite straightforward way. Throughout the proof, $\alpha >0$ and $\beta > \alpha$ denote, respectively, the lower and upper bound of the positive periodic function $a$. 

Let us first note that we can always construct an admissible path $\zeta_{p\to q} \in \cA_{0,\ell}$ to reach from a point $q$ to a point $p$ within time $\ell = \sqrt{d}/\alpha$, if $p$ and $q$ belong to the same cube of unit size, i.e., $p-q \in Y$ or $q-p \in Y$. 
Without loss of generality, assume $p\ne q$ and let $e = (q-p)/|q-p|$ denote the unit directional vector from $p$ to $q$. 
Then the path can be explicitly defined by
\begin{equation}\label{eq:connector}
 \zeta_{p\to q} (t) = \begin{cases}
 p + t\alpha e, & t \in [0,|q-p|/\alpha], \\
 q, & t \in [|q-p|/\alpha,\ell].
 \end{cases}
 \end{equation}

\begin{proof}[Sketch of proof of Lemma \ref{lem:R-t-conv}]
Let $\cR_t(Y)$ be the reachable set at $t > 0$ starting from the unit cell $Y$, that is, $\cR_t(Y)=\bigcup_{x \in Y} \cR_t(x)$. 
Then we have, for all $t,s > 0$,
\begin{equation*}
 \cR_{t+s}(Y) \subset \cR_{s}(Y) + \cR_{t}(Y) + \tilde Y.
 \end{equation*}
By taking closure and then taking the convex hulls, we also get
\begin{equation}
 \co \ol \cR_{t+s}(Y) \subset \co \ol \cR_{s}(Y) + \co \ol \cR_{t}(Y) + \tilde Y.
 \end{equation}
Set $X_m := \co \ol \cR_m(Y) + \tilde Y$ for $m\in \N$. 
We obtain a sequence of convex compact sets $\{X_m\}_{m\in \N}$ which are subadditive in the sense that $X_{m+n} \subset X_m + X_n$.
Then by subadditivity, we get
\begin{equation}
\lim_{n\to \infty} \frac{X_n}{n} = \lim_{n\to \infty} \frac{\co \ol \cR_n(Y)}{n} = D := \bigcap_{n \ge 1}^\infty \frac{X_n}{n}, \qquad \text{in } (\sC, \rho).
\end{equation}
We note that $D$ is a convex and compact set in $\R^n$. Moreover, in view of the controls on $\cR_t$, we have $\ol B_\alpha \subset D \subset \ol B_\beta$. 
By the monotonicity of $\cR_t$ in $t$, and by the fact that $\rho(\co \cR_t, \co \ol \cR_t) = 0$ for all $t$, we also have that
\begin{equation*}
\lim_{t\to \infty} \rho\left(\frac1t \co \cR_t(Y), D\right) = 0.
\end{equation*}

The next step is to show that
\begin{equation*}
\lim_{t\to \infty} \rho\left(\frac1t \cR_t(Y), D\right) = 0
\end{equation*}
still holds, that is to remove the convex hull in the convergence result. Since $\cR_t$ is a subset of its convex hull, we only need to show that
\begin{equation}
\max_{z\in D}\; d\left(z,\frac1t \cR_t(Y)\right) \to 0, \qquad \text{as } t\to \infty.
\end{equation}
If we view the mappings $z \mapsto d(z,t^{-1}\cR_t(Y))$ as a family of functions of $z \in D$ indexed by $t > 0$, then this is an equicontinuous family. 
Because $D$ is a compact set, the above result would follow if we prove that: for every $z \in D$,
\begin{equation}
\label{eq:dlim_pt}
 \lim_{t\to \infty} d\left(z,\frac1t \cR_t(Y)\right) = 0.
 \end{equation}
 Here are the main steps to obtain the above.
First, for exposed points, \eqref{eq:dlim_pt} follows by convex analysis. 
Indeed, if $y$ is an exposed point of $D$, then there exists an affine function $f: \R^n \to \R$ such that $f(y) > f(x)$ for all $x\in D\setminus \{y\}$. 
Let $E_t := \cE(t^{-1}\co \ol \cR_t(Y))$ denote the set of exposed points of the convex hull of $\ol \cR_t(Y)$. 
For each $t>0$, let $x_t$ be a point of $\ol E_t$ where $f$ admits its maximum in $\co \ol \cR_t(Y)$. 
This is possible since $f$ is an affine function and $\co \ol \cR_t(Y)$ is compact and convex, so the maximum can be realized in the closure of the expose set. 
Hence, there exists $x_t \in E_t$ such that
\begin{equation*}
f(x_t/t) \ge \max_{x \in t^{-1}\co \ol \cR_t(Y)} f(x) - \frac1t.
\end{equation*}
Since $\{x_t/t\} \subset \ol B_\beta$, we can find at least one cluster point $z$ of $x_t/t$, and clearly $z \in D$. 
Moreover, passing to the limit in the above equality, we get $f(z) \ge f(y)$.
This shows that the cluster point must be $y$, the pre-fixed exposed point of $D$. 
This shows that
\begin{equation*}
\lim_{t\to \infty} \; d\left(y,\cE \left(\frac1t \co \ol \cR_t(Y)\right)\right)  = 0.
\end{equation*}
On the other hand, since $\cE(\co \ol A) \subset \ol A$, we get
\begin{equation*}
\lim_{t\to \infty} \; d\left(y,\frac1t \ol \cR_t(Y)\right) = 0.
\end{equation*}
Then \eqref{eq:dlim_pt} follows for any $z$ that is an exposed point of $D$.

\smallskip

Next, we consider the case of $z$ being an extreme point of $D$. 
By Straczewicz's theorem, $z$ is a limit point of $\cE(D)$. 
Since \eqref{eq:dlim_pt} hold on $\cE(D)$, it also hold on $\ol \cE(D)$. 
That is it holds for all extreme points of $D$.

\smallskip

Finally, for all other points of $z \in D$, we show $z$ is close to $t^{-1}\cR_t$ as $t\to \infty$ by using the convex combination of extreme points and then by using the fact that extreme points are close to the average reachable set.
More precisely, given any $z \in D$, by Caratheodory's theorem, there exist $n+1$ extreme points $\{y_i\}_i \subset \ol \cE(D)$ and $n+1$ real numbers $\{\lambda_i\}_i \subset [0,1]$ such that $\sum_{i = 1}^{n+1} \lambda_i = 1$, and $z = \sum_{i=1}^{n+1}\lambda_i y_i$.
Without loss of generality, we assume that $\lambda_i > 0$ for all $i$; otherwise we only need to use a smaller number of extreme points.
Fix an arbitrary $\ep > 0$, by \eqref{eq:dlim_pt} for extreme points, there exists a constant $T^\ep_i$, for each $i \in \{1,\cdots, n+1\}$, such that 
\begin{equation}
\label{eq:ind_path}
d\left(y_i, \frac1t \cR_t(Y)\right) < \frac{\ep}{2}, \qquad \text{ for all } t \ge T^\ep_i.
\end{equation}

Next, we show that for all $t > \max\{\sum_{i=1}^{n+1}\lambda_i^{-1}T^\ep_i + n\ell,2\ep^{-1}n\sqrt{n}(1+\beta/\alpha)\}$, we can find a path $\gamma \in \cA_{0,t}$ such that $\gamma(0) \in Y$ and $d(\gamma(t)/t,z) < \ep$. 
This is sufficient to establish \eqref{eq:dlim_pt}.

Let $t' = t - n\ell$. For the first step, we note that $\lambda_1 t' > \lambda_1 T^\ep > T^\ep_1$, so we can find $\gamma_1 \in \cA_{0,\lambda_1 t'}$ such that, if we set $p_1 = \gamma_1(0)$ and $q_1 = \gamma_1(\lambda_1 t')$, then $p_1 \in Y$ and 
\begin{equation*}
\left|y_1 - \frac{q_1}{\lambda_1 t'} \right| < \frac{\ep}{2}.
\end{equation*}
We define the desired path $\gamma$ by $\gamma(s) = \gamma_1(s)$, for $s \in [0,\lambda_1 t']$.
The later steps in the construction of $\gamma$ is by induction. Suppose $\gamma(s)$ is defined for $s \in [0,(k-1)\ell + \sum_{i=1}^k \lambda_i t']$. 
We define the $(k+1)$-th segment of $\gamma$ as follows.
Since $\lambda_{k+1}t' > T^\ep_{k+1}$, we know there exists an admissible path $\gamma_{k+1} \in \cA_{0,\lambda_{k+1}t'}$, and if we set $p_{k+1} = \gamma_{k+1}(0)$ and $q_{k+1} = \gamma_{k+1}(\lambda_{k+1}t')$, then $p_{k+1} \in Y$ and
\begin{equation}
\label{eq:ind_dist}
\left|y_{k+1} - \frac{q_{k+1}}{\lambda_{k+1}t'} \right| < \frac{\ep}{2}.
\end{equation}
Let $T_k = (k-1)\ell + \sum_{i=1}^k \lambda_i t'$, the time upon which $\gamma$ is already constructed. 
Define
\begin{equation*}
\gamma(s) = \begin{cases}
\zeta_{\gamma(T_k) \to [\gamma(T_k)]+p_{k+1}} (s-T_k), &s \in (T_k, T_k+\ell],\\
\gamma_{k+1}(s-T_{k}-\ell), &s \in (T_k+\ell, T_k+\ell + \lambda_{k+1}t'].
\end{cases}
\end{equation*}
The first part of the above construction leads the path $\gamma$ to $p_{k+1} + [\gamma(T_k)]$, which is an integer translation of $p_{k+1}$, and the second part starts from this point and is a translation of $\gamma_{k+1}$. By periodicity of the environment, the translated path is admissible. As a result, we moved the path $\gamma$ forward up to time $T_k + \ell + \lambda_{k+1}t'$.
After a total of $(n+1)$ steps, we have constructed $\gamma$ up to time $\sum_{i=1}^{n+1} \lambda_i t' + n\ell = t$. From the construction, we see that $\gamma \in \cA_{0,t}$. Moreover, we have
\begin{equation*}
 \gamma(t) = \sum_{k=1}^{n+1} q_k + \sum_{k=1}^{n} (\gamma(T_k)-[\gamma(T_k)] - p_k) = \sum_{k=1}^{n+1} q_k + Q
\end{equation*} 
where $Q$ is a point in $nY$. We then have
\begin{equation*}
tz - \gamma(t) = \sum_{k=1}^{n+1} \lambda_k (t'+n\ell) y_k - \sum_{k=1}^{n+1}q_k - Q
= \sum_{k=1}^{n+1} (\lambda_k t' y_k - q_k) - Q + n\ell z.
\end{equation*}
Note that $n\ell z \in n\ell \ol B_{\beta}$. Hence $|Q-n\ell z| \le n\sqrt{n}(1+\beta/\alpha)$. Combining this estimate with \eqref{eq:ind_dist}, we get
\begin{equation*}
\left|z - \frac{\gamma(t)}{t}\right| \le \frac1t \sum_{k=1}^{n+1}|\lambda t' y_k - q_k| + \frac{n\sqrt{n}(1+\beta/\alpha)}{t} < \ep.
\end{equation*}
This completes the proof of \eqref{eq:dlim_pt}.
\end{proof}

We are now ready to give a proof of the qualitative homogenization theorem.
\begin{proof}[Proof of Theorem \ref{thm:hom}]
Note that $x\in \cR_t(y)$ iff $y \in \cR_t(x)$.
Fix $R, T>0$. 
For each $(x,t) \in B_R \times [0,T]$, we use the Hopf-Lax formula for the solution of \eqref{HJ} to imply
\begin{align}\label{u-HL}
u(x,t) =\inf \left \{ g(y)\,:\, \frac{x-y}{t} \in D\right\} &= \inf\left\{g(y)\,:\, y \in x-tD \right\}\\
&= \inf\left\{g(y)\,:\, y \in x+tD \right\}. \notag
\end{align}
On the other hand,
\begin{align}\label{u-ep-rep}
u^\ep(x,t)&= \inf \left \{ g(y)\,:\, \frac{x}{\ep} \in \cR_{\frac{t}{\ep}}\left(\frac{y}{\ep}\right)\right\}=\inf \left \{ g(y)\,:\, \frac{y}{\ep} \in \cR_{\frac{t}{\ep}}\left(\frac{x}{\ep}\right)\right\}\\\
&= \inf\left\{ g(y)\,:\,y \in \ep\left(\left[\frac{x}{\ep}\right] +\cR_{\frac{t}{\ep}}\left(\frac{x}{\ep}-\left[\frac{x}{\ep}\right]\right) \right)\right\} \notag\\
&= \inf\left\{ g(y)\,:\,y \in x+ \ep\left(\cR_{\frac{t}{\ep}}\left(\frac{x}{\ep}-\left[\frac{x}{\ep}\right]\right) -\left(\frac{x}{\ep}-\left[\frac{x}{\ep}\right] \right) \right)\right\}. \notag
\end{align}
By Lemma \ref{lem:R-t-conv}, we yield
\begin{equation}\label{K-conv}
\lim_{\ep \to 0+} \rho \left( \frac{\cR_{t/\ep}(Y)}{t/\ep},D\right)=0,
\end{equation}
which, together with \eqref{u-HL} and \eqref{u-ep-rep}, gives the desired result.
\end{proof}



\begin{thebibliography}{30} 

\bibitem{BB2006}
I. Babenko,  F. Balacheff, {\em Sur la forme de la boule unit\'e de la norme stable unidimensionnelle},  Manuscripta Math., 119(3):347--358, 2006. ISSN 0025--2611.

\bibitem{BIK}
D. Burago, S. Ivanov, B. Kleiner,
\emph{On the structure of the stable norm of periodic metrics},
 Mathematical Research Letters, 4(6) (1997), 791--808.

\bibitem{CDI}
I. Capuzzo-Dolcetta, H. Ishii,
\emph{On the rate of convergence in homogenization of Hamilton--Jacobi equations},
Indiana Univ. Math. J. {50} (2001), no. 3, 1113--1129.

\bibitem{Car} M. J. Carneiro, {\em On minimizing measures of the action of autonomous Lagrangians}, Nonlin-
earity 8 (1995) 1077--1085.

  \bibitem{Ev1}
 L. C. Evans,
\emph{Periodic homogenisation of certain fully nonlinear partial differential equations}, 
Proc. Roy. Soc. Edinburgh Sect. A 120 (1992), no. 3-4, 245--265.

\bibitem{Fa}
A. Fathi, 
Weak KAM Theorem in Lagrangian Dynamics.

\bibitem{Hed} G. A. Hedlund,  
{\em Geodesics on a two-dimensional Riemannian manifold with periodic coefficients},
Ann. of Math. 33 (1932), 719--739.

\bibitem{JST1}
W. Jing, P. E. Souganidis, H. V. Tran,
\emph{ Large time average of reachable sets and Applications to Homogenization of interfaces moving with oscillatory spatio-temporal velocity},
Discrete Contin. Dyn. Syst. Ser. S, 11(5):915--939, 2018.

\bibitem{JTY}
 W. Jing, H. V. Tran, Y. Yu,
 \emph{Inverse problems, non-roundedness and flat pieces of the effective burning velocity from an inviscid quadratic Hamilton-Jacobi model},
{Nonlinearity}, 30 (2017) 1853--1875.

\bibitem{J2009}
M. Jotz,
 {\em Hedlund metrics and the stable norm}, 
Differential Geometry and its Applications,
Volume 27, Issue 4, August 2009, Pages 543--550.

\bibitem{LPV}  
P.-L. Lions, G. Papanicolaou and S. R. S. Varadhan,  
\emph{Homogenization of Hamilton--Jacobi equations}, unpublished work (1987).

\bibitem{LTY}
S. Luo, H. V. Tran, Y. Yu, 
\emph{Some inverse problems in periodic homogenization of Hamilton-Jacobi equations}, 
{Arch. Ration. Mech. Anal.} 221 (2016), no. 3, 1585--1617.

\bibitem{MTY}
H. Mitake, H. V. Tran, Y. Yu,
\emph{Rate of convergence in periodic homogenization of Hamilton-Jacobi equations: the convex setting},
{Arch. Ration. Mech. Anal.}, 2019, Volume 233, Issue 2, pp 901--934.

\bibitem{Tr}
H. V. Tran,
Hamilton--Jacobi equations: viscosity solutions and applications,
book in progress (http://math.wisc.edu/$\sim$hung/HJ equations-viscosity solutions and applications-v2.pdf).

\bibitem{TY}
H. V. Tran, Y. Yu,
 \emph{A rigidity result for effective Hamiltonians with $3$-mode periodic potentials},
{Advances in Math.}, 334,  300--321.

\end {thebibliography}

\end{document}